\newtheorem{thm}{Theorem}
\newtheorem{prop}[thm]{Proposition}
\newtheorem{lem}[thm]{Lemma}
\theoremstyle{remark}
\newtheorem{rem}[thm]{Remark}
\newtheorem{Q}[thm]{Question}
\newcommand{\1}{\mathbf{1}}
\newcommand{\cD}{\mathcal{D}}
\DeclareMathOperator{\supp}{supp}
\DeclareMathOperator{\wt}{wt}
\begin{document}

\title{
On a 5-design related to a putative extremal doubly even 
self-dual code of length a multiple of 24}

\author{
Masaaki Harada\thanks{
Research Center for Pure and Applied Mathematics,
Graduate School of Information Sciences,
Tohoku University, Sendai 980--8579, Japan.
email: mharada@m.tohoku.ac.jp.
This work 
was partially carried out
at Yamagata University.}
}

\maketitle

\begin{abstract}
By the Assmus and Mattson theorem,
the codewords of each nontrivial weight
in an extremal doubly even self-dual code of length $24m$
form a self-orthogonal $5$-design.  
In this paper, 
we study the codes constructed from self-orthogonal $5$-designs 
with the same parameters as the above $5$-designs.
We give some parameters of a self-orthogonal $5$-design
whose existence is equivalent to that of 
an extremal doubly even self-dual code of length $24m$
for $m=3,4,5,6$.
If $m \in \{1,\ldots,6\}$, $k \in \{m+1,\ldots,5m-1\}$
and $(m,k) \ne (6,18)$, then it is shown that
an extremal doubly even self-dual code of length $24m$ 
is generated by codewords of weight $4k$.
\end{abstract}

\noindent
{\bf Keywords} 
self-orthogonal $t$-design, 
extremal doubly even self-dual code, weight enumerator

\noindent
{\bf Mathematics Subject Classification} 94B05, 05B30

\section{Introduction}\label{sec:1}

A doubly even self-dual code of length $n$
exists if and only if $n$ is divisible by $8$.
The minimum weight $d(C)$ of a doubly even self-dual code $C$
of length $n$
is bounded above by $d(C) \le 4 \lfloor n/24\rfloor+4$~\cite{MS73}.
A doubly even self-dual code meeting the bound is called
{\em extremal}.
In case that $n \equiv 0 \pmod{24}$, the only known extremal doubly even
self-dual codes are the extended Golay code and the
extended quadratic residue code of length $48$.
The existence of an extremal doubly even self-dual
code of length $72$ is a long-standing open question~\cite{Sloane72}.

A $t$-$(v,k,\lambda)$ design is called {\em self-orthogonal} if
the block intersection numbers have the same parity
as the block size $k$ (see~\cite{T86}).
If $\cD$ is a self-orthogonal $t$-$(v,k,\lambda)$ design 
with $k$ even, then the code $C(\cD)$, which is 
generated by the rows of an incidence matrix of $\cD$,
is a self-orthogonal code.
By the Assmus and Mattson theorem~\cite{AM},
the supports of the codewords of weight $4k \ (\ne 0,24m)$ in
an extremal doubly even self-dual code of length $24m$
form a self-orthogonal $5$-design.
We denote the parameters of the design by
$5$-$(24m,4k,\lambda_{24m,4k})$.
Then, throughout this paper, we denote any self-orthogonal
$5$-$(24m,4k,\lambda_{24m,4k})$ design by $\cD_{24m,4k}$.
That is, $\cD_{24m,4k}$ is 
a self-orthogonal $5$-design 
with the same parameters as the self-orthogonal
$5$-design formed from the supports of the codewords of 
weight $4k$ in an extremal doubly even self-dual code
of length $24m$.
This gives rise to a natural question, namely,
is the code $C(\cD_{24m,4k})$
always an extremal doubly even self-dual code?

It is well known that $C(\cD_{24,8})$ is the extended Golay code
(see~\cite[Theorem 8.6.2]{AK-book}).
It was shown that $C(\cD_{24m,4m+4})$ ($m =2,3,4$) is an extremal doubly even 
self-dual code~\cite{HMT05,HKM04,H05}, respectively.
This means that the existence of
an extremal doubly even self-dual code
of length $24m$ ($m =1,2,3,4$) is equivalent to that of
a self-orthogonal
$5$-$(24m,4k,\lambda_{24m,4k})$ design, where
$(4k,\lambda_{24m,4k})=(8,  1)$,
$(12, 8 )$,
$(16, 78 )$ and
$(20, 816 )$, respectively. 
The powerful tool which is used in~\cite{HKM04,HMT05} 
is the use of fundamental equations, sometimes called the 
Mendelsohn equations~\cite{M71} (see also e.g., \cite{T86}),
obtained by counting the number of blocks  
that meet $S$ in $i$ points for some subset $S$ of the point set.
The approach in~\cite{H05} is also similar to that 
in~\cite{HKM04,HMT05} except that 
Gleason's theorem (see~\cite{MS73}) is employed 
to obtain stronger consequences.

In this paper, we study self-orthogonal $5$-designs $C(\cD_{24m,4k})$
for $k \in \{m+2,\ldots,5m-1\}$, which are
related to codewords of weight other than the minimum weight.
More precisely,  we consider a problem whether
$C(\cD_{24m,4k})$ is an extremal doubly even self-dual code or not
for $m \in \{1,\ldots,6\}$ and $k \in \{m+2,\ldots,5m-1\}$.
In addition to the above approach done in~\cite{H05,HKM04,HMT05},
it is useful in this paper
to observe weight enumerators of $C(\cD_{24m,4k})$ and its dual codes,
and singly even self-dual codes containing 
$C(\cD_{24m,4k})$ and their shadows.
As a summary, in Table~\ref{Tab:D}\footnote{See Sections~\ref{sec:SD}
and~\ref{sec:min} for the marks $*$ in Table~\ref{Tab:D}.},
we list some partial answers to the above problem
for $m \in \{1,\ldots,6\}$ and $k \in \{m+1,\ldots,3m\}$.
For the cases $(24m,4k)$ that $C(\cD_{24m,4k})$ is self-dual,
we list ``Yes'' in the second column of Table~\ref{Tab:D}.
When $C(\cD_{24m,4k})$ is self-dual,
we list ``Yes''  in the third column
in case that $C(\cD_{24m,4k})$ is extremal.
We also list the possible 
minimum weights, when $C(\cD_{24m,4k})$ is self-dual but
not extremal.
It is shown that
$C({\cD}_{24m,4k}) = C({\cD}_{24m,24m-4k})$
for $m \in \{1,\ldots,6\}$ and $k \in \{m+1,\ldots,3m-1\}$
(Proposition~\ref{prop:CD}).

The main results of this paper are the following theorems.

\begin{thm}\label{thm:main1}
Suppose that $(24m,k,\lambda)$ is each of the following:
\begin{align*}
&
(72, 24, 1406405 ), (72, 32, 238957796 ),
\\ &
(96,36, 28080500448 ), 
(96,44, 1167789832440 ),
\\ &
(120, 56, 5156299310025435),
(144, 68, 21788133027489299328).
\end{align*}
Then the existence of a self-orthogonal
$5$-$(24m,k,\lambda)$ design is equivalent to that of
an extremal doubly even self-dual code
of length $24m$.
\end{thm}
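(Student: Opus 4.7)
The forward direction is immediate: by the Assmus--Mattson theorem, the supports of the codewords of weight $4k'$ in an extremal doubly even self-dual code of length $24m$ form a self-orthogonal $5$-design whose parameter $\lambda_{24m,4k'}$ matches each value listed in the statement (this is how the $\lambda$ values were generated). So I focus on the converse.

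Suppose $\cD$ is a self-orthogonal $5$-$(24m,4k',\lambda)$ design with the stated parameters, and form $C(\cD)$, the binary linear span of the rows of an incidence matrix. The overall plan is to prove that $C(\cD)$ is an extremal doubly even self-dual code of length $24m$. Since the block size $4k'$ is divisible by $4$ and $\cD$ is self-orthogonal, all pairwise block intersections are even, hence $C(\cD)$ is doubly even and self-orthogonal; in particular $\dim C(\cD)\le 12m$. It remains to force $\dim C(\cD)=12m$ and minimum weight $4m+4$.

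The next step is to use the Mendelsohn fundamental equations (counting blocks meeting a fixed subset $S$ of the point set in prescribed numbers of points) exactly as in \cite{HKM04,HMT05,H05}, applied to supports of low weight codewords in $C(\cD)^\perp$, to produce divisibility constraints and to pin down parts of the weight distribution of $C(\cD)$ around weight $4k'$. In parallel, I would apply Gleason's theorem to constrain the weight enumerator of $C(\cD)^\perp$ (or of any singly even self-dual code $C'$ with $C(\cD)\subseteq C'\subseteq C(\cD)^\perp$, as described in the introduction), and combine this with the shadow weight enumerator of $C'$ via the Conway--Sloane shadow inequalities. The design information gives the exact number $\binom{24m}{5}\lambda/\binom{4k'}{5}$ of codewords of weight $4k'$ in $C(\cD)$, which feeds as a linear equation into the finite-dimensional Gleason space and eliminates the remaining free parameters. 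The conclusion is that the only consistent weight enumerator is the extremal doubly even self-dual one, which simultaneously forces $\dim C(\cD)=12m$ and $d(C(\cD))=4m+4$.

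The main obstacle is the intermediate step for the larger cases, particularly $(m,k')=(5,14)$ and $(6,17)$, where only one nontrivial weight class is at our disposal. There the Gleason space has several free parameters, and the design equation alone is not enough; the argument must squeeze additional constraints out of the shadow (non-negativity and small-weight gap) and out of the Mendelsohn relations at multiple subset sizes $|S|$ in order to rule out the putative non-extremal or non-self-dual alternatives. The calculations are case by case in $(m,k')$, but each reduces to showing that a specific short linear system admits no integral solution other than the extremal one.
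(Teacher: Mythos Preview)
Your outline has the right ingredients but misses the key step, and one claim is not justified.

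First, the design does \emph{not} give the exact number of codewords of weight $4k'$ in $C(\cD)$: the block vectors are codewords of that weight, but sums of block vectors could in principle produce further codewords of weight $4k'$. So you only get the inequality $A_{4k'}\ge \lambda_0$, not an equation. Treating it as an equation is a genuine gap.

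Second, and more importantly, even as an inequality this is a single constraint, while for $m=4,5,6$ the Gleason space of doubly even self-dual weight enumerators with $d\ge 12$ has $2$, $3$, $4$ free parameters respectively. The reason the paper's argument works for precisely the six pairs $(24m,4k')$ in the theorem (and \emph{not} for neighboring weights) is a monotonicity phenomenon you have not identified: when one writes $A_{4k'}$ as a linear function of $A_{12},A_{16},\dots,A_{4m}$, every coefficient is strictly negative for these particular $4k'$. Hence $A_{4k'}\ge\lambda_0$ together with $A_{4i}\ge 0$ forces $A_{12}=\cdots=A_{4m}=0$, i.e.\ extremality. Without isolating this sign pattern you cannot close the argument, and your proposed fallback (shadows of intermediate singly even self-dual codes, multiple Mendelsohn sizes $|S|$) is both unnecessary here and would not obviously succeed.

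Third, for the six cases in the theorem, self-duality is established by the Mendelsohn equations alone (Lemma~\ref{lem:basic}(ii)): for every $w\not\equiv 0\pmod 4$ the system~\eqref{eq:M} has no nonnegative integer solution. Only \emph{after} self-duality is known does Gleason's theorem apply to $C(\cD)$ itself. Your plan to apply Gleason to $C(\cD)^\perp$, or to bring in shadows of singly even overcodes, conflates this step with the harder self-duality arguments the paper uses for \emph{other} parameter pairs (those marked with $*$ in the self-dual column of Table~\ref{Tab:D}); none of that machinery is needed for the six cases of Theorem~\ref{thm:main1}.
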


\begin{thm}\label{thm:main2}
Suppose that $m \in \{1,\ldots,6\}$ and
$k \in \{m+1,\ldots,5m-1\}$.
If $(m,k) \ne (6,18)$, then
an extremal doubly even self-dual code of length $24m$ 
is generated by codewords of weight $4k$.
\end{thm}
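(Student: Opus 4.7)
Fix an extremal doubly even self-dual code $C$ of length $24m$ and let $D \subseteq C$ be the subcode spanned by the codewords of weight $4k$. By the Assmus--Mattson theorem those codewords are the characteristic vectors of the blocks of a self-orthogonal $5$-$(24m,4k,\lambda_{24m,4k})$ design $\cD_C$, so $D = C(\cD_C)$; in particular $D$ is doubly even and self-orthogonal. Since $C$ is self-dual of dimension $12m$ and $D \subseteq C$, the conclusion $D = C$ is equivalent to $\dim D = 12m$, i.e.\ to $D$ itself being self-dual.

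A first reduction comes from Proposition~\ref{prop:CD}: the map $c \mapsto c + \mathbf{1}$ bijects weight-$4k$ codewords with weight-$(24m-4k)$ codewords and identifies the two generated subcodes, so one may assume $m+1 \le k \le 3m$. The excluded case $(m,k) = (6,18)$ is precisely the fixed point $k = 3m$ of this involution at $m=6$.

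One then proceeds case by case. When $k = m+1$ and $m \in \{1,2,3,4\}$, so that $4k = 4m+4$ is the minimum weight, $D = C$ is classical ($m=1$, the Golay code) or was proved in \cite{HMT05,HKM04,H05} ($m=2,3,4$). For the six parameter sets $(24m,4k,\lambda_{24m,4k})$ appearing in Theorem~\ref{thm:main1}, the equivalence proved there yields that $C(\cD_{24m,4k})$, and hence $D = C(\cD_C)$, is already extremal doubly even self-dual, forcing $\dim D = 12m$ and $D = C$. For each remaining pair $(m,k)$ in the range $m \in \{1,\ldots,6\}$, $m+1 \le k \le 3m$, $(m,k) \ne (6,18)$, one carries out the weight-enumerator analysis summarised in Table~\ref{Tab:D} and developed in Sections~\ref{sec:SD} and~\ref{sec:min}: starting from the inclusions $D \subseteq C \subseteq D^\perp$, the double-evenness and self-orthogonality of $D$, the Assmus--Mattson formulas for $A_{4j}(C)$, and the MacWilliams identity, one uses Gleason's theorem and, when that is not sharp enough, the weight enumerator of the shadow of a singly even self-dual code lying between $D$ and $D^\perp$ to pin down the weight enumerator of $D$ and force $\dim D = 12m$.

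The main obstacle is precisely that last dimension step. The Mendelsohn-type fundamental equations together with the Gleason parametrisation typically leave several free coefficients in the putative weight enumerator of $D$, and one must exclude each candidate corresponding to a proper subcode $D \subsetneq C$. The difficulty is most acute near $k = 3m$, where complementation offers no additional information; this is exactly where $(m,k) = (6,18)$ resists the method and must be excluded, while for every other pair in the stated range the combined constraints admit only the extremal self-dual solution, yielding $D = C$.
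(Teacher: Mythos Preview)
Your approach is correct and matches the paper's: the decisive step is that $C(\cD_{24m,4k})$ is self-dual for all listed $(m,k)$, whence $D=C(\cD_C)\subseteq C$ together with $\dim D=12m$ forces $D=C$. Two small remarks: the paper needs only Section~\ref{sec:SD} (self-duality) for this---your invocations of Theorem~\ref{thm:main1} and Section~\ref{sec:min} (extremality) are superfluous for Theorem~\ref{thm:main2}---and the bijection $c\mapsto c+\mathbf 1$ by itself only gives $\langle D_k,\mathbf 1\rangle=\langle D_{6m-k},\mathbf 1\rangle$, not equality of the subcodes; that is precisely why Proposition~\ref{prop:CD} (via Lemma~\ref{lem:CD}) verifies $\mathbf 1\in C(\overline{\cD_{24m,4k}})$ separately.
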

\begin{rem}
For some cases $(m,k)$, the above theorem is already 
known (see Table~\ref{Tab:D}).
It is still unknown whether $C(\cD_{144, 72})$ is self-dual
or not (see Remark~\ref{rem}).
\end{rem}

\begin{table}[p]
\caption{Codes $C(\cD_{24m,4k})$ ($m=1,\ldots,6$, $k=m+1,\ldots,3m$)}
\label{Tab:D}
\begin{center}
{\footnotesize
\begin{tabular}{l|c|c|c} 
\noalign{\hrule height0.8pt}
\multicolumn{1}{c|}{Parameters of $\cD_{24m,4k}$}  &
Self-dual  & Extremal & Ref.\\
\hline
$( 24,  8,  1)$  &Yes & Yes & (see~\cite{AK-book}) \\
$( 24, 12, 48)$  &Yes & Yes &\cite{T86} \\
\hline
$( 48, 12, 8 )$     &Yes & Yes &\cite{HMT05}\\
$( 48, 16, 1365 )$  &Yes & Yes &\cite{D}\\
$( 48, 20, 36176 )$ &Yes & Yes &\cite{D}\\
$( 48, 24, 190680 )$&Yes &  8, 12  \\
\hline
$( 72, 16, 78 )$       &Yes & Yes &\cite{HKM04}\\
$( 72, 20, 20064 )$    &Yes & 12, 16 &\cite{D}\\
$( 72, 24, 1406405 )$  &Yes & Yes$^*$\\
$( 72, 28, 30888000 )$ &Yes$^*$ & 12, 16          & \\
$( 72, 32, 238957796 )$&Yes & Yes$^*$\\
$( 72, 36, 693996160 )$&Yes & 12, 16 &\cite{D}\\
\hline
$( 96, 20, 816 )$          & Yes & Yes &\cite{H05}\\
$( 96, 24, 257180 )$       &Yes & 16, 20&\cite{D}\\
$( 96, 28, 29975400 )$     &Yes & 12, 20$^*$\\
$( 96, 32, 1390528685 )$   &Yes & 12, 16, 20 &\cite{D}\\
$( 96, 36, 28080500448 )$  &Yes & Yes$^*$\\
$( 96, 40, 261513764460 )$ &Yes & 12, 16, 20 &\cite{D}\\
$( 96, 44, 1167789832440 )$&Yes & Yes$^*$\\
$( 96, 48, 2561776811880 )$&Yes$^*$ & 12, 16, 20 & \\
\hline
$( 120, 24, 8855 )$  &         Yes &  16, 24 &\cite{CW12}\\
$( 120, 28, 3146400 )$         &Yes & 16, 20, 24 &\\
$( 120, 32, 502593700 )$       &Yes & 12, 16, 24$^*$&\\
$( 120, 36, 37237713920 )$     &Yes$^*$ & 12--24&\\
$( 120, 40, 1372275835848 )$   &Yes$^*$ & 12, 24$^*$&\\
$( 120, 44, 26386953577600 )$  &Yes$^*$ & 12--24&\\
$( 120, 48, 274320081834480 )$ &Yes$^*$ & 12, 24$^*$&\\
$( 120, 52, 1582247888524800 )$&Yes$^*$ & 12--24 &\\
$( 120, 56, 5156299310025435 )$&Yes & Yes$^*$&\\
$( 120, 60, 9606041207517888 )$&Yes$^*$ & 12--24 &\\
\hline
$( 144, 28, 98280 )$               &Yes & 16, 20, 28 &\cite{HMM}\\
$( 144, 32, 37756202 )$            &Yes & 16--28&\\
$( 144, 36, 7479335776 )$          &Yes & 16, 20, 28$^*$&\\
$( 144, 40, 765322879032 )$        &Yes & 12--28&\\
$( 144, 44, 42785304274536 )$      &Yes & 12, 16, 20, 28$^*$&\\
$( 144, 48, 1359454757387265 )$    &Yes & 12--28&\\
$( 144, 52, 25319185698144240 )$   &Yes & 12, 16, 28$^*$&\\
$( 144, 56, 283096123959568608 )$  &Yes$^*$ & 12--28&\\ 
$( 144, 60, 1935608752827917264 )$ &Yes & 12, 28$^*$&\\
$( 144, 64, 8205989047403924124 )$ &Yes & 12--28&\\
$( 144, 68, 21788133027489299328 )$&Yes & Yes$^*$&\\
$( 144, 72, 36470135955078919440 )$& ?  & --\\
\noalign{\hrule height0.8pt}
\end{tabular}
}
\end{center}
\end{table}

\section{Preliminaries}\label{sec:Pre}

\subsection{Self-dual codes and shadows}

In this paper, codes mean binary codes.
A code is called {\em doubly even} if 
every codeword has weight a multiple of $4$.
A code $C$ is called
\textit{self-orthogonal} if $C \subset C^{\perp}$,
and $C$ is  called
\textit{self-dual} if $C = C^{\perp}$,
where
$C^{\perp}$ is the dual code of $C$ under the
standard inner product.
A self-dual code which is not doubly even
is called
{\em singly even}, namely, 
a  singly even self-dual code contains a
codeword of weight $\equiv 2
\pmod 4$.
It is  known that a self-dual code of length $n$ exists
if and only if  $n$ is even, and
a doubly even self-dual code of length $n$
exists if and only if $n$ is divisible by eight.
The minimum weight $d(C)$ of a doubly even
self-dual code $C$ of length $n$
is bounded by
$d(C)  \le 4  \lfloor n/24 \rfloor + 4$~\cite{MS73}.
A doubly even self-dual code meeting the bound is called  {\em extremal}.
In case that $n \equiv 0 \pmod{24}$, the only known extremal doubly even
self-dual codes are the extended Golay code and the
extended quadratic residue code of length $48$.
The existence of an extremal doubly even self-dual
code of length $72$ is a long-standing open 
question~\cite{Sloane72}. 

Let $C$ be a singly even self-dual code and let $C_0$ denote the
subcode of codewords having weight $\equiv0\pmod4$. Then $C_0$ is
a subcode of codimension $1$. The {\em shadow} $S$ of $C$ is
defined to be $C_0^\perp \setminus C$. 
Shadows were introduced by Conway and Sloane~\cite{C-S}, in order
to
provide restrictions on the weight enumerators of singly even
self-dual codes
(see~\cite{C-S} for fundamental results on shadows).
Let $D$ be a doubly even code of length $n \equiv 0 \pmod 8$.
Suppose that $D$ has dimension $n/2-1$ and
$D$ contains the all-one vector $\1$.
Then there are three self-dual codes
lying between $D^\perp$ and $D$, 
one of which is singly even and the others are doubly even
(see~\cite{MST}).

\subsection{Self-orthogonal designs and Mendelsohn equations} 

A $t$-$(v,k,\lambda)$ design $\cD$ is a set $X$ of $v$ points
together with a collection of $k$-subsets of $X$ (called blocks)
such that every $t$-subset of $X$ is contained in exactly
$\lambda$ blocks.
A $t$-design with no repeated block is called {\em simple}.
In this paper, designs mean simple designs.
It follows that
every $i$-subset of points ($i\le t$) is contained in exactly
$\lambda_i =\lambda {\binom{v-i}{t-i}}/{\binom{k-i}{t-i}}$ blocks.
The number $\lambda_1$ is traditionally denoted by $r$, and the total number of
blocks is $b=\lambda_0$.
A $t$-design can be represented by its (block-point)
incidence matrix $A=(a_{ij})$, where $a_{ij}=1$ if the $j$th
point is contained in the $i$th block
and $a_{ij}=0$ otherwise.

The {\em block intersection numbers} of a $t$-$(v,k,\lambda)$ design
are the cardinalities of
the intersections of any two distinct blocks.
A $t$-$(v,k,\lambda)$ design 
is called {\em self-orthogonal} if
the block intersection numbers have the same parity
as the block size $k$ (see~\cite{T86}).
The term self-orthogonal is due to a natural connection between 
such designs and self-orthogonal codes.
Throughout this paper, we denote the code generated by the rows of an 
incidence matrix of $\cD$ by $C(\cD)$.
If $\cD$ is a self-orthogonal $t$-$(v,k,\lambda)$ design
with $k$ even, then $C(\cD)$
is a self-orthogonal code.

Let $\cD$ be a $t$-$(v,k,\lambda)$ design.
Let $v\in C(\cD)^{\perp}$ be a vector of weight $w>0$.
Denote by $n_i$ the number of rows of an incidence matrix of
$\cD$ intersecting exactly 
$i$ positions of the support of $v$ in ones. Then 
we have the system of equations:
\begin{equation}
\label{eq:M}
\sum_{i=0}^{\min\{k,w\}}\binom{i}{j}n_{i}
=\lambda_{j}\binom{w}{j}
\quad(j=0,1,\ldots,t).
\end{equation}
These fundamental equations, which 
are sometimes called Mendelsohn equations~\cite{M71}
(see also~\cite{T86}),
are the powerful tool in the study of this paper.
We note that $n_i=0$ if $i$ is odd, $i > k$ or $i > w$. 

The following lemma follows immediately.

\begin{lem}\label{lem:basic}
Let $\cD$ be a self-orthogonal $t$-$(v,k,\lambda)$ design
with $k \equiv 0 \pmod 4$.
\begin{itemize}
\item[\rm (i)]
If the system of equations {\rm (\ref{eq:M})} has
no solution $(n_0,n_2,\ldots)$ consisting of nonnegative
integers for some $w$, then
$C(\cD)^\perp$ contains no vector of weight $w$.
\item[\rm (ii)]
If the system of equations {\rm (\ref{eq:M})} has
no solution $(n_0,n_2,\ldots)$ consisting of nonnegative
integers for each $w$ with $0 < w < v$, 
$w \not \equiv 0 \pmod 4$, then
$C(\cD)$ is doubly even self-dual.
\end{itemize}
\end{lem}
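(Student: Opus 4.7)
My plan is to derive (i) directly by reversing the Mendelsohn counting argument, and then obtain (ii) as a formal consequence of (i) together with standard facts about doubly even codes.

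For (i), I would start from a hypothetical $u \in C(\cD)^\perp$ of weight $w > 0$ and produce a nonnegative integer solution of (\ref{eq:M}): for each $i$, let $n_i$ be the number of blocks $B$ of $\cD$ with $|\supp(u) \cap B| = i$. These $n_i$ are plainly nonnegative integers that vanish when $i > \min\{k,w\}$, and they vanish for odd $i$ as well, because $u \in C(\cD)^\perp$ forces every block (viewed as a $0/1$ vector) to be orthogonal to $u$, i.e., to meet $\supp(u)$ in an even number of points. The equations (\ref{eq:M}) themselves follow from double-counting, for each $j \le t$, the pairs $(B,T)$ with $T$ a $j$-subset of $\supp(u) \cap B$: summing over blocks first yields $\sum_i \binom{i}{j} n_i$, while summing over $T$ first yields $\lambda_j \binom{w}{j}$. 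The contrapositive is precisely (i).

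For (ii), I would apply (i) to every $w \in \{1,\ldots,v-1\}$ with $w \not\equiv 0 \pmod 4$ to conclude that $C(\cD)^\perp$ contains no codeword of such weight. In the intended setting $v$ is divisible by $4$ (indeed $v = 24m$), so the weight-$v$ all-one vector also has weight $\equiv 0 \pmod 4$, and hence every codeword of $C(\cD)^\perp$ has weight divisible by $4$; that is, $C(\cD)^\perp$ is doubly even. The identity $\wt(x+y) = \wt(x) + \wt(y) - 2|\supp(x) \cap \supp(y)|$ then shows that any doubly even code is automatically self-orthogonal, giving $C(\cD)^\perp \subseteq (C(\cD)^\perp)^\perp = C(\cD)$. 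The reverse inclusion $C(\cD) \subseteq C(\cD)^\perp$ already holds because $\cD$ is self-orthogonal with $k$ even, so the rows of any incidence matrix of $\cD$ intersect pairwise in an even number of points. Combining the inclusions yields $C(\cD) = C(\cD)^\perp$, and this code is doubly even because it is generated by vectors of weight $k \equiv 0 \pmod 4$.

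I do not anticipate a genuine obstacle: both parts are bookkeeping once the map from weight-$w$ dual codewords to nonnegative integer solutions of (\ref{eq:M}) is made explicit. The only cosmetic point is that one must know $v \equiv 0 \pmod 4$ to place the all-one vector in the doubly even range when finishing (ii); this is implicit in the lemma, since the conclusion that $C(\cD)$ is doubly even self-dual would otherwise be vacuous, and in the paper's application $v = 24m$ makes the condition automatic.
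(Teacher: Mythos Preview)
Your argument is correct and is precisely the reasoning the paper has in mind: the paper does not give a separate proof of this lemma at all, simply stating that it ``follows immediately'' from the Mendelsohn equations~(\ref{eq:M}) as set up just before. Your contrapositive for (i) and the chain of inclusions $C(\cD)\subseteq C(\cD)^\perp\subseteq C(\cD)$ for (ii) are exactly what ``immediately'' is intended to encode, and your observation about needing $v\equiv 0\pmod 4$ is a fair cosmetic caveat that the paper leaves implicit since $v=24m$ throughout.
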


The complementary design $\overline{\cD}$ of a design $\cD$ is
obtained by replacing each block of $\cD$ by its complement.
The following lemma is used in Section~\ref{sec:min}
to show that
$C({\cD}_{24m,4k}) = C({\cD}_{24m,24m-4k})$
for $m \in \{1,\ldots,6\}$ and $k \in \{m+1,\ldots,3m-1\}$.

\begin{lem}\label{lem:CD}
Let $\cD$ be a self-orthogonal $t$-$(v,k,\lambda)$
design with $k$ even.
Suppose that $C(\cD)$ is self-dual.
Then $C(\cD)=C(\overline{\cD})$ if $\1\in C(\overline{\cD})$,
and 
$C(\overline{\cD}) \subset C(\cD)$ with
$|C(\cD):C(\overline{\cD})|=2$ otherwise.
\end{lem}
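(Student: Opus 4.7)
The plan is to work at the level of generators. A row of the incidence matrix of $\overline{\cD}$ is the complement of the corresponding row of the incidence matrix of $\cD$, so if $\{r_i\}$ generates $C(\cD)$, then $\{\1+r_i\}$ generates $C(\overline{\cD})$. The whole lemma should come out of linear-algebraic manipulation of these two generating sets, once I understand how $\1$ sits relative to the two codes.

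The first step I would take is to show that $\1\in C(\cD)$. Each generator $r_i$ has weight $k$, and since $k$ is even one has $\1\cdot r_i\equiv 0\pmod 2$, so $\1$ is orthogonal to every generator of $C(\cD)$; self-duality then gives $\1\in C(\cD)^{\perp}=C(\cD)$. As an immediate consequence, every generator $\1+r_i$ of $C(\overline{\cD})$ already lies in $C(\cD)$, which establishes the inclusion $C(\overline{\cD})\subseteq C(\cD)$ \emph{unconditionally}, independent of which case of the lemma we are in.

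From here the two cases are short. If $\1\in C(\overline{\cD})$, I would write $r_i=\1+(\1+r_i)\in C(\overline{\cD})$ for every $i$, giving the reverse inclusion $C(\cD)\subseteq C(\overline{\cD})$, and hence equality. In the other case, $\1$ lies in $C(\cD)\setminus C(\overline{\cD})$, so the containment $C(\overline{\cD})\subsetneq C(\cD)$ is strict; to compute the index I would enlarge by $\1$ and note that $C(\overline{\cD})+\langle\1\rangle$ contains every $\1+r_i$ together with $\1$, hence every $r_i$, and therefore contains $C(\cD)$. Combined with the strict inclusion this forces $C(\cD)=C(\overline{\cD})+\langle\1\rangle$ and $|C(\cD):C(\overline{\cD})|=2$.

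There is no real obstacle in this argument: the only nontrivial input is the observation that $\1\in C(\cD)$, which is the single place where the evenness of $k$ and the self-duality of $C(\cD)$ are used jointly. Everything after that is elementary generator bookkeeping, and one does not need to invoke the $t$-design parameters or the self-orthogonality of the block intersections at all.
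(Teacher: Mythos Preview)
Your argument is correct and follows essentially the same route as the paper's proof: establish $\1\in C(\cD)$ from self-duality, deduce $C(\overline{\cD})\subseteq C(\cD)$ and $\langle C(\overline{\cD}),\1\rangle=C(\cD)$, and read off both cases. The paper compresses all of this into three lines, whereas you spell out the generator bookkeeping, but the content is identical.
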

\begin{proof}
Since $C(\cD)$ is self-dual, $\1 \in C(\cD)$.
It turns out that $C(\overline{\cD}) \subseteq C(\cD)$
and $\langle C(\overline{\cD}), \1 \rangle = C(\cD)$.
The result follows.
\end{proof}


\section{On the self-duality}\label{sec:SD}

In this section, 
we describe how to determine the self-duality
given in the second column of Table~\ref{Tab:D}
for the cases denoted by $*$ in Table~\ref{Tab:D}.
For the other cases, 
the self-duality is determined by 
Lemma~\ref{lem:basic} (ii) only.

\begin{prop}\label{prop:SD-P1}
The codes $C(\cD_{72,28})$,
$C(\cD_{96,48})$, $C(\cD_{120,60})$ and $C(\cD_{120,52})$
are self-dual.
\end{prop}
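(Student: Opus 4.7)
The plan is to show, for each of the four codes $C = C(\cD_{24m,4k})$ in the statement, that $C^\perp$ contains no vector of weight $w$ with $0 < w < 24m$ and $w \not\equiv 0 \pmod 4$; self-duality then follows as in the proof of Lemma~\ref{lem:basic}(ii). Since these four cases are marked $*$ in Table~\ref{Tab:D} precisely because Lemma~\ref{lem:basic}(ii) alone is insufficient, the Mendelsohn calculation must be supplemented by the weight enumerator and shadow arguments advertised in Section~\ref{sec:1}.

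First, I would run the system \eqref{eq:M} for each of the four designs and for every $w$ with $0 < w < 24m$ and $w \equiv 2 \pmod 4$, extracting by Lemma~\ref{lem:basic}(i) a residual set $W$ of candidate weights that could still occur in $C^\perp$. Odd weights cannot occur, since $C$ is generated by blocks of even weight $4k$ (so $\1 \in C^\perp$) and $C$ is doubly even self-orthogonal. If $W = \emptyset$ we are done, so assume $W \ne \emptyset$.

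Second, suppose for contradiction that $C \ne C^\perp$. Then $C^\perp$ must in fact contain some vector $v$ of weight $w \in W$: for if every element of $C^\perp \setminus C$ had weight $\equiv 0 \pmod 4$, then $C^\perp$ itself would be doubly even, hence self-orthogonal, forcing $C^\perp \subseteq C$ and $C = C^\perp$. Now $\langle C, v \rangle$ is self-orthogonal and singly even, so any self-dual code $D$ containing it is singly even, and every weight-$\equiv 2 \pmod 4$ vector of $D$ lies in $C^\perp$ and thus has weight in $W$. By Gleason's theorem, the weight enumerator of $D$ depends on a small number of free parameters, and the Conway--Sloane identities express the shadow weight enumerator of $D$ in terms of the same parameters. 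The final step is to show that imposing ``all weights of $D$ that are $\equiv 2 \pmod 4$ lie in $W$'' together with nonnegativity and integrality of all coefficients of both enumerators has no admissible solution.

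The main obstacle is this last step: for each of the four cases one must enumerate the singly even weight enumerators of length $24m$ compatible with $W$ and verify infeasibility of the resulting linear system on the free Gleason parameters, after substituting into the shadow identity. This is routine but case-sensitive, and the length $120$ examples (where $W$ is apt to be largest and the Gleason polynomial carries the most free parameters) require the most careful bookkeeping; in those cases it may also be necessary to sharpen the exclusion by iterating Lemma~\ref{lem:basic}(i) with several weights simultaneously before the shadow step becomes tractable.
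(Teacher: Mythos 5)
Your reduction to a weight-enumerator feasibility problem is in the general spirit of the paper, but there are two gaps, one of which is fatal as written. The minor one first: you dismiss odd weights in $C^\perp$ on the grounds that $\1\in C^\perp$, but that only says every codeword of $C$ has even weight; evenness of $C^\perp$ would follow from $\1\in C$, which is not yet known at that stage. The paper instead excludes every odd $w$ by checking that the system (\ref{eq:M}) has no nonnegative solution, and you need the same check before your dichotomy ``if $C\ne C^\perp$ then $C^\perp$ contains a vector of weight $\equiv 2\pmod 4$'' is available.

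The decisive gap is in your final step. The contradiction in the paper comes from the lower bound $A_{4k}\ge\lambda_0$: the design has $\lambda_0$ blocks, so any code containing $C(\cD_{24m,4k})$ has at least $\lambda_0$ codewords of weight $4k$, and the weight-enumerator constraints force $A_{4k}<\lambda_0$. Your feasibility system omits this constraint entirely, and without it there is no reason to expect infeasibility: singly even self-dual codes of lengths $72$, $96$ and $120$ with the relevant minimum weights exist, and their weight enumerators satisfy nonnegativity, integrality and the shadow identities; ruling them out would require $W$ to exclude weights that such codes must realize, which you have no control over (and which the Mendelsohn computation is not guaranteed to deliver, especially at length $120$ where you yourself note $W$ is apt to be large). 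What the paper actually does for these four cases involves no shadows at all: from $\lambda_0>2^{32}$ (and its analogues) it pins $\ell=\dim C$ to a short range $\{n/2-3,\ldots,n/2\}$, writes the weight enumerator of the doubly even code of dimension $\ell<n/2$ with unknown coefficients, imposes $B_2=\cdots=B_{10}=0$ on the dual via the MacWilliams identity, and deduces $A_{4k}<\lambda_0$, contradicting the block count. The singly-even/shadow route you describe is the one the paper reserves for Proposition~\ref{prop:SD-P2}, and even there only for the codimension-one subcase, again with the block count $\lambda_0$ supplying the contradiction. To repair your argument, add $A_{4k}\ge\lambda_0$ to the feasibility system and make it, rather than the $W$-constraints alone, the engine of the infeasibility proof.
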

\begin{proof}
All cases are similar, and we only give the details
for $C(\cD_{72,28})$.

Note that $\cD_{72,28}$ has  the following parameters:
\begin{multline*}
\lambda_{0} = 4397342400, 
\lambda_{1} = 1710077600, 
\lambda_{2} = 650311200, 
\\
\lambda_{3} = 241544160, 
\lambda_{4} = 87516000, 
\lambda_{5} = 30888000.
\end{multline*}
Let ${v}\in C(\cD_{72,28})^{\perp}$ be a vector of weight $w >0$.
For each $w$ of the cases
with $w \equiv 1 \pmod 2$ and $w \le 8$, the system of 
equations (\ref{eq:M}) has no solution.
In addition, for $w=10$, (\ref{eq:M}) has the following unique solution:
\begin{multline*}
n_0 = 41076475, 
n_2 = 1096595775, 
n_4 = 2375199750, 
\\
n_6 = 834337350,
n_8 = 50284575, 
n_{10}= -151525.
\end{multline*}
Hence, 
there is no vector of weights $2,4,6,8,10$ in $C(\cD_{72,28})^{\perp}$.
The number $\lambda_0$ of blocks satisfies
that $2^{32} < \lambda_0 < 2^{33}$.
Therefore, $C(\cD_{72,28})^{\perp}$ is an even code such that
the minimum weight is at least $12$ and
the dimension is at most $39$.

Let $D_{72}$ be a doubly even code of length $72$ satisfying the conditions
that $D_{72}$ has dimension $\ell \in \{33,34,35,36\}$,
both $D_{72}$ and $D_{72}^\perp$ have minimum weights 
at least $12$ and $\1 \in D_{72}$.
We denote the weight enumerators
of $D_{72}$ and $D_{72}^\perp$ by $W_{D_{72}}$ and 
$W_{D_{72}^\perp}$, respectively.
In this case, 
$W_{D_{72}}$ can be written as:
\begin{align*}
&
x^{72} + a x^{60} y^{12} + b x^{56} y^{16} + c x^{52} y^{20} 
+ d x^{48} y^{24} + e x^{44} y^{28} 
+ f x^{40} y^{32} 
\\&
+ (2^\ell -2 -2 a - 2 b - 2 c - 2 d - 2 e - 2 f) x^{36} y^{36} 
+\cdots + y^{72},
\end{align*}
using nonnegative integers $a,b,c,d,e,f$.
Set $W_{D_{72}^\perp} = \sum_{i=0}^{72} B_i x^{72-i}y^i$.
By the MacWilliams identity, we have:
\begin{align*}
2^{\ell}B_2 =
& 2^6(\chi_{2,\ell} + 36 a + 25 b + 16 c + 9 d + 4 e + f),\\
2^{\ell}B_4 =&2^6(\chi_{4,\ell} + 5640 a + 2450 b + 800 c + 114 d 
  - 56 e - 30 f),\\
2^{\ell}B_6 =& 
2^6(\chi_{6,\ell} + 313060 a + 77385 b + 8976 c - 1223 d + 196 e + 433 f),\\
2^{\ell}B_8 =& 
2^6(\chi_{8,\ell} + 7582080 a + 811360 b - 43520 c - 5280 d  + 1408 e - 4000 f),\\
2^{\ell}B_{10} =& 
2^6(\chi_{10,\ell} + 86892960 a + 887656 b - 372096 c + 100584 d - 17248 e 
\\ &+ 26536 f),
\end{align*}
where $(\chi_{2i,33},\chi_{2i,34},\chi_{2i,35})$ are as follows:
\begin{align*}
&(-4831838127    , -9663676335     ,  -19327352751), \\
&(84557200770    , 169114369410    ,  338228706690), \\
&(-958309695231  , -1916624273151  ,  -3833253428991), \\
&(7906469297760  , 15812564565600  ,  31624755101280), \\
&(-50582253079512, -101181262793688,  -202379282222040),
\end{align*}
for $i=1,2,3,4,5$, respectively.

The assumptions $B_{2i}=0$ $(i=1,2,3,4,5)$ yield
the following:
\[
b = \alpha_\ell - 12 a,
c = \beta_\ell + 66 a, 
d = \gamma_\ell - 220 a,
e = \delta_\ell + 495 a,
f = \varepsilon_\ell - 792 a,
\]
where 
\begin{align*}
(\alpha_\ell,\beta_\ell,\gamma_\ell,\delta_\ell,\varepsilon_\ell)=&
(30105,2273040,57830955,549766080,2075173947), \\
&(61497,4534992,115706955,1099419840,4150537083),\\
&(124281,9058896,231458955,2198727360,8301263355),
\end{align*}
for $\ell=33,34,35$, respectively.
For $\ell=33,34,35$,
it follows from $b \ge 0$ that
\[
e =\delta_\ell + 495a 
\le \delta_\ell + \frac{165}{4} \alpha_\ell <  4397342400 =\lambda_0.
\]
Since 
$C(\cD_{72,28})$ contains at least $4397342400$ codewords of 
weight $28$,
we obtain a contradiction.
Therefore, $C(\cD_{72,28})$ must be self-dual.
\end{proof}


\begin{prop}\label{prop:SD-P2}
The codes $C(\cD_{120, 36})$, $C(\cD_{120, 40})$, $C(\cD_{120, 44})$, 
$C(\cD_{120, 48})$ and $C(\cD_{144, 56})$ are self-dual.
\end{prop}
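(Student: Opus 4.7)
The plan is to follow the same template as the proof of Proposition \ref{prop:SD-P1}, treating each of the five codes in turn, since the argument is uniform in structure. For each design $\cD$ in the list, let $v\in C(\cD)^\perp$ have weight $w>0$. I would first run through (\ref{eq:M}) for every odd $w$ with $w \le 2t+1$ where $t=5$, verifying that the system admits no nonnegative integer solution; combined with Lemma~\ref{lem:basic}(i) this forces $C(\cD)^\perp$ to be even. Next, for each small even $w\in\{2,4,6,8,10\}$ I would either show the system has no nonnegative solution or that the unique solution has a negative coordinate, exactly as in the $w=10$ case for $\cD_{72,28}$. This step should go through because the $\lambda_j$ are very large while the $n_i$ are forced to oscillate.

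With $C(\cD)^\perp$ shown to be even of minimum weight at least $12$, compare $\lambda_0$ against powers of $2$ to obtain an upper bound on $\dim C(\cD)$, equivalently a lower bound $n/2$ on $\dim C(\cD)^\perp$. Supposing $C(\cD)$ is not self-dual, there exists a doubly even code $D$ of length $n\in\{120,144\}$, with $C(\cD)\subseteq D\subseteq C(\cD)^\perp$, $\1\in D$, $\dim D=\ell$ for some integer $\ell$ strictly greater than $n/2$ and bounded above by the dimension bound just derived, and both $D$ and $D^\perp$ have minimum weight at least $12$. I would then write down a general form of $W_D$ with free nonnegative integer coefficients $a,b,c,d,\ldots$ attached to the allowed weights $12,16,20,\ldots$, fixing the coefficient of $x^{n/2}y^{n/2}$ via $|D|=2^\ell$ and using $\1\in D$ to symmetrize.

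Applying the MacWilliams identity gives expressions $2^\ell B_{2i}$ as linear forms in $a,b,c,\ldots$ with explicit integer constants $\chi_{2i,\ell}$ depending only on $\ell$. Imposing $B_2=B_4=\cdots=B_{10}=0$ yields a linear system that I expect to solve (as in Proposition~\ref{prop:SD-P1}) by expressing $b,c,d,e,\ldots$ as affine functions of the single free parameter $a$, where the constant terms $\alpha_\ell,\beta_\ell,\ldots$ and the slopes are explicit rationals. The nonnegativity of $b$ (or whichever coefficient has a negative slope in $a$) then bounds $a$ from above, and plugging this bound into the coefficient $e$ attached to $x^{n-4k}y^{4k}$ yields an upper bound on $e$ strictly smaller than $\lambda_0$. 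But $C(\cD)\subseteq D$ contributes at least $\lambda_0$ codewords of weight $4k$ to $D$, contradicting $e<\lambda_0$. This contradiction forces $D=C(\cD)=C(\cD)^\perp$.

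The main obstacle is computational rather than conceptual: the weight enumerator of a length-$144$ doubly even code with minimum weight $\ge 12$ has many more free coefficients than in the length-$72$ case (up to weight $72$), so more of the $B_{2i}=0$ equations must be imposed to cut the parameter space down to a single free variable, and the case of $C(\cD_{144,56})$ may require additionally bounding $B_{12}$ or tracking several candidate values of $\ell$. One also has to verify that the slope–constant interaction actually yields the right sign so that the bound on $e$ beats $\lambda_0$; if the naive bound from $b\ge 0$ is too weak, one has to exploit further inequalities $c,d\ge 0$ to sharpen it. Handling $\cD_{120,40},\cD_{120,44},\cD_{120,48}$ together should be streamlined by noting that these codes have the same ambient length and similar $\lambda$-profiles, so the same template of linear-algebra reduction plus a single nonnegativity bound applies with minor arithmetic adjustments.
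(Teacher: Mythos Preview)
Your plan follows the template of Proposition~\ref{prop:SD-P1}, and that template does cover most of the work here, but it is \emph{not} sufficient by itself. The paper's own proof explicitly remarks that ``this argument does not work to show that $\ell \ne 59$'' in the $C(\cD_{120,40})$ case. Two concrete issues:

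First, the Mendelsohn equations for $\cD_{120,40}$ only rule out odd $w$ and even $w\le 8$; they do \emph{not} exclude $w=10$. So $C(\cD)^\perp$ is only known to be even with minimum weight at least $10$, not $12$ as you assume. This means $D^\perp$ may have codewords of weight $10$, and you cannot impose $B_{10}=0$ in the MacWilliams system. With one fewer linear constraint, your reduction to a single free parameter $a$ breaks down precisely when $\ell$ is close to $n/2$.

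Second, and more seriously, even after exploiting all the nonnegativity constraints $b,c,d,\ldots\ge 0$, the weight-enumerator bound on the number of weight-$4k$ codewords is too weak to beat $\lambda_0$ when $\ell=n/2-1$. The paper handles this borderline case by a genuinely different idea: since $D$ then has codimension $1$ in its dual and contains $\1$, there are exactly three self-dual codes between $D$ and $D^\perp$, one of which is singly even. Call it $C$, with shadow $S$. The Conway--Sloane relations~\cite{C-S} between $W_C$ and $W_S$, together with the minimum-weight bounds on $C$ and $S$ inherited from $D^\perp$, give many more linear constraints (both $A_{2i}\ge 0$ and $B_{4i}\ge 0$) than the raw MacWilliams identity for the pair $(D,D^\perp)$. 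It is this shadow argument---verified in the paper by a \textsc{Mathematica} computation over a system in eight integer parameters---that finally forces $A_{4k}<\lambda_0$ and yields the contradiction. Your proposal does not mention shadows at all, and without them the case $\ell=n/2-1$ remains open.
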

\begin{proof}
All cases are similar, and we only give the details
for $C(\cD_{120, 40})$.

Note that $\cD_{120, 40}$ has  the following parameters:
\begin{multline*}
\lambda_{0} = 397450513031544, 
\lambda_{1} = 132483504343848, 
\lambda_{2} = 43418963608488, 
\\
\lambda_{3} = 13982378111208,
\lambda_{4} = 4421777693288, 
\lambda_{5} = 1372275835848.
\end{multline*}
Let ${v}\in C(\cD_{120, 40})^{\perp}$ be a vector of weight $w >0$.
For each $w$ of the cases with
$w \equiv 1 \pmod 2$ and $w \le 8$, 
the system of equations (\ref{eq:M}) has no solution.
The number $\lambda_0$ of blocks satisfies
that $2^{48} < \lambda_0 < 2^{49}$.
Hence, $C(\cD_{120, 40})^{\perp}$ is an even code such that
the minimum weight is at least $10$ and
the dimension is at most $71$.

Let $D_{120}$ be a doubly even code of length $120$ satisfying the conditions
that $D_{120}$ has dimension $\ell \in \{49,\ldots,60\}$, 
$D_{120}$ has minimum weight at least $12$,
$D_{120}^\perp$ has minimum weight at least $10$ 
and $\1 \in D_{120}$.
We show that $\ell \ne 49,50,\ldots,59$
in the following two steps.

The first step shows that $\ell \ne 49,\ldots,58$.
The approach is similar to that 
given in Proposition~\ref{prop:SD-P1}.
Suppose that $\ell \in \{49,\ldots,58\}$.
Then,
by considering the possible weight enumerators of $D_{120}$ 
and $D_{120}^\perp$,
one can obtain a contradiction for each $\ell$.
Since the situation is more complicated than that for
$C(\cD_{72,28})$ considered in Proposition~\ref{prop:SD-P1},
we omit the details to save space. 
We remark that this argument does not work to show
that $\ell \ne 59$.

The second step shows that $\ell \ne 59$.
The approach is to consider singly even
self-dual codes containing $D_{120}$.
Suppose that $\ell=59$.
Since $D_{120}$ contains $\1$,
there are three self-dual codes
lying between $D_{120}^\perp$ and $D_{120}$, 
one of which is singly even and the others are doubly even
(see~\cite{MST}).
We denote the singly even code by $C_{120}$, noting that
$D_{120}$ is the subcode $(C_{120})_0$ consisting of codewords
of weight $\equiv 0 \pmod 4$ of $C_{120}$.
Let $S_{120}$ be the shadow of $C_{120}$.
Since the weight of a vector in $S_{120}$ is divisible by four
\cite{C-S} and $D_{120}^{\perp}$ has minimum weight at least $10$,
$C_{120}$ and $S_{120}$ 
have minimum weights at least $10$ and $12$, respectively.
Using~\cite[(10) and (11)]{C-S}, from the condition on the 
minimum weights,
one can determine the possible weight enumerators 
$\sum_{i=0}^{120} A_i x^{120-i}y^i$
and 
$\sum_{i=0}^{120} B_i x^{120-i}y^i$
of $C_{120}$ and $S_{120}$, respectively.
In this case, the possible weight enumerators 
can be written using integers $a,b,c,d,e,f,g,h$.

We investigate the number of codewords of weight $40$.
In this case, we have that
\begin{multline*}
A_{40}=
198725556937080 + 32980992a - 28160b - 15504c \\ 
+ 4896d + 161525e - 599494f - 4385880g + 91345008h.
\end{multline*}
Using the mathematical software {\sc Mathematica},
we have verified that
$A_{2i} \ge 0$ $(i=5,\ldots,16)$ and
$B_{4i} \ge 0$ $(i=3,\ldots,9)$
yield
\[
A_{40} <397450513031544 = \lambda_0,
\]
where
$A_{2i}$ $(i=5,\ldots,16)$ and
$B_{4i}$ $(i=3,\ldots,9)$ are listed in 
Tables~\ref{Tab:120-A} and \ref{Tab:120-B}, respectively.
Since $C(\cD_{120,40})$ contains at least $397450513031544$ codewords of 
weight $40$, we obtain a contradiction.
Therefore, $C(\cD_{120,40})$ must be self-dual.
This completes the proof.
\end{proof}

\begin{table}[thb]
\caption{Weight enumerator of $C_{120}$}
\label{Tab:120-A}
\begin{center}
{\scriptsize
\begin{tabular}{c|l} 
\noalign{\hrule height0.8pt}
 $i$ & \multicolumn{1}{c}{$A_{i}$} \\
\hline
{10}&$h$\\
{12}&$g + 30h$\\
{14}&$f + 24g + 425h$\\
{16}&$e + 18f + 264g + 3760h$\\
{18}&$d + 12e + 139f + 1736g + 23100h$\\
{20}&$c + 6d + 50e + 564f + 7380g + 103256h$\\
{22}&$64b - 3d + 28e +  1009f + 19800g + 339180h + 26391755$\\
{24}&$4096a - 384b - 20c - 88d - 441e - 1218f + 25080g + 789840h$\\
{26}&$265912320 - 49152a - 64b - 102d - 1288e- 10717f - 35640g + 1096410h$\\
{28}&$2968094880 + 221184a + 4864b + 190c + 564d+ 364e  - 20424f
     - 238590g - 118980h$\\
{30}&$29559455744 - 311296a - 6720b + 1210d+ 7800e  + 7631f 
    - 473880g - 4961862h$\\
{32}&$238259763105 - 946176a - 25984b - 1140c - 1944d + 9971e + 103766f 
    - 182952g - 13088880h$ \\
\noalign{\hrule height0.8pt}
\end{tabular}
}
\end{center}
\end{table}

\begin{table}[thb]
\caption{Weight enumerator of $S_{120}$}
\label{Tab:120-B}
\begin{center}
{\scriptsize
\begin{tabular}{c|l} 
\noalign{\hrule height0.8pt}
 $i$ & \multicolumn{1}{c}{$B_{i}$} \\
\hline
{12}&$a$ \\
{16}&$17250 - 24a - b$ \\
{20}&$- 315744 + 276a + 22b + c$ \\
{24}&$42581630 - 2024a - 231b - 20c - 64d$ \\
{28}&$6084129120 + 10626a + 1540b + 190c +  1152d + 4096e$ \\
{32}&$475718702550 - 42504a - 7315b - 1140c - 9792d - 65536e - 262144f$ \\
{36}&$18824260734240 + 134596a + 26334b + 4845c +  52224d
     + 491520e + 3670016f + 16777216g$ \\
\noalign{\hrule height0.8pt}
\end{tabular}
}
\end{center}
\end{table}

\begin{rem}\label{rem}
If $C(\cD_{144,72})^\perp$ has minimum weight at least $10$,
then one can show that $C(\cD_{144,72})$ is self-dual by 
an argument similar to that described in above.
\end{rem}

For $m \in \{1,\ldots,6\}$ and $k \in \{m+1,\ldots,3m-1\}$,
the self-duality of $C(\cD_{24m,4k})$ has been verified above.
As a consequence, we have the following:

\begin{prop}\label{prop:CD}
If $m \in \{1,\ldots,6\}$ and $k \in \{m+1,\ldots,3m-1\}$,
then
$C({\cD}_{24m,4k}) = C({\cD}_{24m,24m-4k})$.
\end{prop}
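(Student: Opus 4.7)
The plan is to apply Lemma~\ref{lem:CD} to $\cD_{24m,4k}$, using its known self-duality, and then to rule out the alternative that makes $C(\overline{\cD_{24m,4k}})$ a proper index-two subcode. First I would check that the complementary design $\overline{\cD_{24m,4k}}$ is itself a self-orthogonal $5$-$(24m,24m-4k,\lambda_{24m,24m-4k})$ design: complementation carries any simple $5$-design to a $5$-design with the appropriate complementary parameters, and the block intersection numbers of $\overline{\cD_{24m,4k}}$ equal $24m-8k+s$ for $s$ an intersection number of $\cD_{24m,4k}$, so their common parity $\equiv s \pmod 2$ is even, matching the parity of $24m-4k$. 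Consequently I may take $\cD_{24m,24m-4k} = \overline{\cD_{24m,4k}}$, and it suffices to show $C(\cD_{24m,4k}) = C(\overline{\cD_{24m,4k}})$.

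The self-duality of $C(\cD_{24m,4k})$ established earlier in this section permits Lemma~\ref{lem:CD} to be applied, yielding either equality (the desired conclusion) or the index-two alternative $C(\overline{\cD_{24m,4k}}) \subsetneq C(\cD_{24m,4k})$ with $\1 \notin C(\overline{\cD_{24m,4k}})$. In the latter case the orthogonal $C(\overline{\cD_{24m,4k}})^\perp$ strictly contains $C(\cD_{24m,4k}) = C(\cD_{24m,4k})^\perp$, so there is a vector $u \in C(\overline{\cD_{24m,4k}})^\perp \setminus C(\cD_{24m,4k})$. Since $C(\overline{\cD_{24m,4k}})$ is spanned by the vectors $B + \1$ with $B$ a block of $\cD_{24m,4k}$, the relation $u \perp C(\overline{\cD_{24m,4k}})$ reads $|B \cap \supp(u)| \equiv |u| \pmod 2$ for every block $B$. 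If $|u|$ were even then $u$ would be orthogonal to every block and hence lie in $C(\cD_{24m,4k})^\perp = C(\cD_{24m,4k})$, a contradiction; so $|u| = w$ is odd and every block meets $\supp(u)$ in an odd number of points.

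Writing $n_i$ for the number of blocks of $\cD_{24m,4k}$ meeting $\supp(u)$ in exactly $i$ points, one then has $n_i = 0$ for every even $i$, and the Mendelsohn system~(\ref{eq:M}) becomes a tight linear system in the variables $n_i$ with $i$ odd. The remaining task is to verify, for each $(m,k)$ with $m \in \{1,\ldots,6\}$ and $k \in \{m+1,\ldots,3m-1\}$ and each admissible odd weight $w$ with $0 < w \le 4k$, that this restricted system admits no nonnegative integer solution, thereby contradicting the existence of $u$. The main obstacle is this case analysis: it is of the same flavour as the arguments in Propositions~\ref{prop:SD-P1} and~\ref{prop:SD-P2}, but restricted to the odd-indexed variables and carried out for all relevant odd $w$. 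For small $w$ the equations collapse immediately---$w=1$, for instance, would force every block to contain the unique point of $\supp(u)$, giving the impossible $\lambda_0 = \lambda_1$---and the remaining odd weights are treated uniformly by the same method.
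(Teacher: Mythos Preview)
Your argument is essentially the paper's: both invoke Lemma~\ref{lem:CD} once self-duality is in hand, and both discharge the index-two alternative by showing via the Mendelsohn system~(\ref{eq:M}) that $C(\overline{\cD_{24m,4k}})^\perp$ contains no vector of odd weight (equivalently, that $\1\in C(\overline{\cD_{24m,4k}})$). The only cosmetic difference is that the paper runs~(\ref{eq:M}) for the complementary design $\overline{\cD_{24m,4k}}$ itself, so the surviving variables are $(n_0,n_2,\ldots)$ and Lemma~\ref{lem:basic}\,(i) applies verbatim; it also observes that for the five pairs $(24m,4k)\in\{(72,16),(72,32),(120,32),(144,32),(144,64)\}$ the replication number $r$ of $\overline{\cD_{24m,4k}}$ is odd, which gives $\1\in C(\overline{\cD_{24m,4k}})$ for free. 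Your uniform Mendelsohn treatment subsumes those five cases too, since an odd $r$ already forces a parity contradiction between the $j=0$ and $j=1$ equations.

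One slip to fix: the range ``$0<w\le 4k$'' is too small. The hypothetical odd-weight vector $u$ can have any weight $0<w<24m$; by passing from $u$ to $u+\1$ (which also lies in $C(\overline{\cD_{24m,4k}})^\perp\setminus C(\cD_{24m,4k})$) you may assume $w\le 12m$, but there is no reason to restrict to $w\le 4k$. The case analysis must cover all odd $w$ in this wider range.
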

\begin{proof}
It is trivial that
${\cD}_{24m,24m-4k} = \overline{{\cD}_{24m,4k}}$.
For $m \in \{1,\ldots,6\}$ and $k \in \{m+1,\ldots,3m-1\}$,
the codes $C({\cD}_{24m,4k})$ are self-dual (see Table~\ref{Tab:D}).

For $(24m,4k) \in \{(72,16),
(72,32),
(120,32),
(144,32), 
(144,64)\}$,
since the $5$-design 
$\overline{{\cD}_{24m,4k}}$
has odd $r$, $\1 \in C(\overline{{\cD}_{24m,4k}})$.
Consider the remaining cases.
The system of equations (\ref{eq:M}) has
no solution $(n_0,n_2,\ldots)$ consisting of nonnegative
integers for each odd $w$.
By Lemma~\ref{lem:basic} (i),
$\1 \in C(\overline{{\cD}_{24m,4k}})$.
The result follows from Lemma~\ref{lem:CD}.
\end{proof}

By the above proposition, 
for $m \in \{1,\ldots,6\}$ and $k \in \{m+1,\ldots,3m-1\}$,
$C({\cD}_{24m,4k})$ and $C({\cD}_{24m,24m-4k})$ are
self-dual.
In addition, $C({\cD}_{24m,12m})$ are self-dual for $m \in
\{1,\ldots,5\}$.
This completes the proof of Theorem~\ref{thm:main2}.

\section{On the minimum weights}\label{sec:min}

In this section, 
we describe how to determine the minimum weights
given in the third column of Table~\ref{Tab:D}
for the cases denoted by $*$ in Table~\ref{Tab:D}.
For the other cases, 
the minimum weights are determined by 
Lemma~\ref{lem:basic} (i) only.
The result in this section
completes the proof of Theorem~\ref{thm:main1}.

\subsection{$(24m,4k)=(72, 24),( 72, 32)$}

Suppose that $4k \in\{24, 32\}$.
Let ${v}\in C(\cD_{72,4k})^{\perp}$ be a 
vector of weight $w >0$.
For each $w \in \{4,8\}$, 
the system of equations (\ref{eq:M}) has no solution.
From the result in the previous section, 
$C(\cD_{72,4k})$ is a doubly even self-dual code.
By Lemma~\ref{lem:basic} (i),
$C(\cD_{72,4k})$  is a doubly even self-dual code
of length $72$ and minimum weight at least $12$.

By Gleason's theorem (see~\cite{MS73}), 
the weight enumerator of a doubly even self-dual
code of length $n$ can be written as:
\[
\sum_{i=0}^{\lfloor n/24 \rfloor} a_i (x^8+14x^4y^4+y^8)^{n/8-3i}
(x^4y^4(x^4-y^4)^4)^i,
\]
using integers $a_i$.
Hence,
the weight enumerator of $C(\cD_{72,4k})$
can be written as:
\begin{align*}
&
x^{72} + \alpha x^{60} y^{12}
+ (249849  - 12 \alpha) x^{56} y^{16}
+ (18106704  + 66 \alpha) x^{52} y^{20}
\\&
+ (462962955  -  220 \alpha) x^{48} y^{24}
+ (4397342400  + 495 \alpha) x^{44} y^{28}
\\&
+ (16602715899 - 792 \alpha) x^{40} y^{32}
+ (25756721120 +  924 \alpha) x^{36} y^{36}
+ \cdots,
\end{align*}
using a nonnegative integer $\alpha$.
If $\alpha > 0$, then 
the number of codewords of weight $4k=24$ (resp.\ $32$)
is less than
$462962955$ (resp.\ $16602715899$),  
which is
the number of blocks of $\cD_{72,24}$ (resp.\ $\cD_{72,32}$).
Hence, $\alpha=0$.
This means that $C(\cD_{72,4k})$ must be extremal.

\subsection{$(24m,4k)=(96,28), ( 96, 36), ( 96, 44)$}

%

%
The numbers of blocks of $\cD_{96,28}, \cD_{96,36}$ and $\cD_{96,44}$ 
are
\[
18642839520, 4552866656416 \text{ and }65727011639520,
\]
respectively.
If $4k \in\{28, 36,  44\}$, then
it follows from (\ref{eq:M})
that the doubly even self-dual code
$C(\cD_{96,4k})$ has minimum weight at least $12$.
The weight enumerator $\sum_{i=0}^{96}A_i x^{96-i}y^i$
of $C(\cD_{96,4k})$
can be written using
integers $\alpha,\beta$, where
$A_i$ are listed in Table~\ref{Tab:96}.
If there is an integer $i\in \{12,16\}$ with $A_i > 0$,
then 
\[
A_{36} =4552866656416 -4368 A_{12} -192412 A_{16} <4552866656416,
\]
which is the number of the blocks of $\cD_{96,36}$.
This gives a contradiction.
Hence, $A_{12}=A_{16}=0$, then
$\alpha=\beta=0$.
This means that $C(\cD_{96,36})$ is extremal.
Similarly, one can easily show that
$C(\cD_{96,44})$ is extremal, and that
$C(\cD_{96,28})$ is extremal
if $d(C(\cD_{96,28})) \ge 16$.

\begin{table}[thbp]
\caption{Weight enumerator of $C(\cD_{96,4k})$}
\label{Tab:96}
\begin{center}
{\footnotesize
\begin{tabular}{c|l} 
\noalign{\hrule height0.8pt}
 $i$ & \multicolumn{1}{c}{$A_{i}$} \\
\hline
12& $\beta$\\
16& $\alpha + 30 \beta$\\
20& $3217056 - 16 \alpha + 153 \beta $\\
24& $369844880   + 120 \alpha - 1712 \beta $\\
28& $18642839520  - 560 \alpha - 3084 \beta $\\
32& $422069980215  + 1820 \alpha + 69576 \beta $\\
36& $4552866656416 - 4368 \alpha - 323452 \beta $\\
40& $24292689565680 + 8008 \alpha + 842544 \beta $\\
44& $65727011639520 - 11440 \alpha- 1443090 \beta $\\
48& $91447669224080 + 12870 \alpha+ 1718068 \beta $\\
\noalign{\hrule height0.8pt}
\end{tabular}
}
\end{center}
\end{table}

\subsection{$(24m,4k)=(120, 32),(120,40),(120,48),(120,56)$}

The numbers of blocks of $\cD_{120, 32}, \cD_{120,40},
\cD_{120,48}$ and $\cD_{120,56}$ are
\begin{multline*}
475644139425,
397450513031544, 
\\
30531599026535880 \text{ and  }
257257766776517715,
\end{multline*}
respectively.
If $4k \in\{32,40,48,56\}$, then
it follows from (\ref{eq:M})
that the doubly even self-dual code
$C(\cD_{120,4k})$ has minimum weight at least $12$.
The weight enumerator $W_{120,12}=\sum_{i=0}^{120}A_i x^{120-i}y^i$
of $C(\cD_{120,4k})$
can be written using
integers $\alpha,\beta,\gamma$, where
$A_i$ are listed in Table~\ref{Tab:120}.
If there is an integer $i\in \{12,16,20\}$ with $A_i > 0$,
then 
\begin{align*}
A_{56} = &
257257766776517715-1130786592 A_{12} - 16300570 A_{16} 
\\ & - 167960 A_{20}  < 257257766776517715,
\end{align*}
which gives a contradiction.
Hence, $A_{12}=A_{16}=A_{20}=0$, then
$\alpha=\beta=\gamma=0$.
This means that $C(\cD_{120,56})$ is extremal.
Similarly, one can easily show that
$C(\cD_{120,4k})$ is extremal for $4k=40,48$, and that
$C(\cD_{120, 32})$ is extremal 
if $d(C(\cD_{120, 32})) \ge 20$.

\begin{table}[thbp]
\caption{Weight enumerator of $C(\cD_{120,4k})$}
\label{Tab:120}
\begin{center}
{\footnotesize
\begin{tabular}{c|l} 
\noalign{\hrule height0.8pt}
 $i$ & \multicolumn{1}{c}{$A_{i}$} \\
\hline
12& $\gamma$ \\
16& $\beta + 72\gamma$ \\
20& $\alpha + 26\beta + 2004\gamma$ \\
24& $39703755 -20\alpha + 39\beta + 25272\gamma$ \\
28& $6101289120 + 190\alpha - 2148\beta + 100866\gamma$ \\
32& $475644139425 -1140\alpha + 4563\beta - 621288\gamma$ \\
36& $18824510698240+ 4845\alpha + 71058\beta - 3973756\gamma$ \\
40& $397450513031544 -15504\alpha - 613259\beta + 18650088\gamma$ \\
44& $4630512364732800+ 38760\alpha + 2564432\beta + 37650159\gamma$ \\
48& $30531599026535880 -77520\alpha - 7035366\beta - 434682288\gamma$ \\
52& $116023977311397120+ 125970\alpha + 13909076\beta + 1412322984\gamma$ \\
56& $257257766776517715 -167960\alpha - 20667530\beta - 2641019472\gamma$ \\
60& $335200280030755776+ 184756\alpha + 23538216\beta + 3223090716\gamma$ \\
\noalign{\hrule height0.8pt}
\end{tabular}
}
\end{center}
\end{table}


\subsection{$(24m,4k)=(144,36), (144,52),(144,60),(144,68)$}

The numbers of blocks of 
$\cD_{144,36}$,  $\cD_{144,52}$,  $\cD_{144,60}$ and $\cD_{144,68}$ are
\begin{multline*}
9542972508784, 
4686006803807297232, 
\\
170473729066542803616  \text{ and  }
1005386522059285093728, 
\end{multline*}
respectively.
If $4k \in\{36,52,60,68\}$, then
it follows from (\ref{eq:M})
that the doubly even self-dual code
$C(\cD_{144,4k})$ has minimum weight at least $12$.
The weight enumerator $W_{144,12}=\sum_{i=0}^{144}A_i x^{144-i}y^i$
of $C(\cD_{144,4k})$
can be written using
integers $\alpha,\beta,\gamma,\delta$, where
$A_i$ are listed in Table~\ref{Tab:144}.
If there is an integer $i\in \{12,16,20,24\}$ with $A_i > 0$,
then 
\begin{align*}
A_{68} =& 1005386522059285093728 - 1215686694585 A_{12}
\\& 
-16397532256 A_{16} -246582076 A_{20} -2496144 A_{24}\\ 
<& 1005386522059285093728,
\end{align*}
which gives a contradiction.
Hence, $A_{12}=A_{16}=A_{20}=A_{24}=0$, then
$\alpha=\beta=\gamma=\delta=0$.
This means that $C(\cD_{144,68})$ is extremal.
Similarly, 
one can easily show that
$C(\cD_{144,60})$ is extremal, that
$C(\cD_{144,52})$ is extremal if $d(C(\cD_{144,52})) \ge 20$, and that
$C(\cD_{144,36})$ is extremal if $d(C(\cD_{144,36})) \ge 24$.

\begin{table}[thbp]
\caption{Weight enumerator of $C(\cD_{144,4k})$}
\label{Tab:144}
\begin{center}
{\scriptsize
\begin{tabular}{c|l} 
\noalign{\hrule height0.8pt}
 $i$ & \multicolumn{1}{c}{$A_{i}$} \\
\hline
12&$\delta$ \\
16&$\gamma + 114\delta$ \\
20&$\beta + 68\gamma + 5619\delta$ \\
24&$\alpha + 22\beta + 1722\gamma + 154820\delta$ \\
28&$ 481008528-24\alpha - 59\beta + 17684\gamma + 2550861\delta$ \\ 
32&$90184804281+276\alpha - 2152\beta + 11515\gamma + 24260742\delta$ \\ 
36&$9542972508784-2024\alpha + 13286\beta - 881064\gamma + 102200559\delta$ \\ 
40&$559456467836112+10626\alpha + 39788\beta - 982492\gamma - 215159832\delta$ \\ 
44&$18950225255363376-42504\alpha - 861482\beta + 30439192\gamma - 3223863171\delta$ \\
48&$381888573368657355+134596\alpha + 5423416\beta - 58206711\gamma + 568124866\delta$ \\
52&$4686006803807297232-346104\alpha - 21252317\beta - 458108660\gamma +55774876695\delta$ \\ 
56&$35648745873701148864+735471\alpha + 59961226\beta + 3298378982\gamma -82891353732\delta$ \\ 
60&$170473729066542803616-1307504\alpha - 129387017\beta - 11030355684\gamma -479267780119\delta$ \\ 
64&$517692242136399518331 +1961256\alpha + 220368688\beta + 24037485819\gamma +2310638405958\delta$ \\
68&$1005386522059285093728-2496144\alpha - 301497244\beta - 37463473392\gamma -4857003070893\delta$ \\ 
72&$1253789175212713133280 +2704156\alpha + 334387688\beta + 43291346040\gamma +6110981295024\delta$ \\
\noalign{\hrule height0.8pt}
\end{tabular}
}
\end{center}
\end{table}
\bigskip
\noindent {\bf Acknowledgments.}
The author would like to thank Tsuyoshi Miezaki for
verifying the calculations in the proofs
of Propositions~\ref{prop:SD-P1} and
\ref{prop:SD-P2}, independently.
This work is supported by JSPS KAKENHI Grant Number 23340021.


\end{document}